\documentclass{amsart}
\usepackage{amsmath,amsfonts,amsthm,amssymb,setspace}
\usepackage{tikz}
\usetikzlibrary{arrows,decorations.pathmorphing,backgrounds,positioning,fit}
\onehalfspacing
\newtheorem{thm}{Theorem}[subsection]

\newtheorem{lemma}{Lemma}[subsection]

\newtheorem{coro}{Corollary}[subsection]

\theoremstyle{definition}
\newtheorem{defn}{Definition}[subsection]
\theoremstyle{remark}
\newtheorem{rem}{Remark}[subsection]
\numberwithin{equation}{section}
\setcounter{tocdepth}{3}
\pagestyle{headings}

\def\O{\mathcal{O}}
\def\A{\mathcal{C}}
\def\X{\mathcal{X}}

\newcommand{\gmat}[2][ccccccccccccccccccccccccccccccccc]{\left[\begin{array}{#1} #2\\ \end{array}\right]}

\title{The Weil-Petersson 2-form on an acyclic cluster variety.}

\author{Greg Muller}
\address{Department of Mathematics,
Louisiana State University, Baton Rouge, LA 70808, USA}
\email{gmuller@lsu.edu}

\begin{document}

\begin{abstract}
The Weil-Petersson 2-form on a cluster variety is a 2-form on a certain open smooth subvariety; the union of the cluster tori.  We show that for acyclic cluster varieties, the Weil-Petersson 2-form extends to a regular K\"ahler 2-form on the entire cluster variety.
\end{abstract}

\maketitle

\section{Introduction}

Cluster algebras are commutative algebras generated by a distinguished set of elements which may be recursively constructed from some initial data.  They were introduce by Fomin and Zelevinsky \cite{FZ02} in the study of total positivity and canonical bases.  The corresponding \emph{cluster varieties} have been shown to occur both as double Bruhat cells in semisimple Lie groups \cite{BFZ05} and as complexifications of Penner's decorated Teichm\"uller spaces \cite{GSV05}, \cite{FG06}.  

Drawing inspiration from the Weil-Petersson 2-form on a decorated Teichm\"uller space, Gekhtman, Shapiro and Vainshtein \cite{GSV05} defined a 2-form $\omega$ on a smooth, dense subset of a cluster variety.  This form (also called the Weil-Petersson form) is a geometric manifestation of some of the extra structure on the cluster algebra beyond just being a commutative algebra.  

It is natural to ask whether the form $\omega$ extends to the entire cluster variety.  However, cluster varieties may be singular, and the geometric definition of a differential form breaks down at a singularity.  Thus it is necessary to generalize to the language of \emph{K\"ahler forms}, which are an algebraic analog of differentials forms which may be defined at singularities.

In this paper, we show that, for a large class of cluster varieties called \emph{acyclic cluster varieties}, the Weil-Petersson form $\omega$ extends as a K\"ahler 2-form to the entirety of the cluster variety (Theorem \ref{thm: Main}).  As a consequence, when the cluster variety is smooth, the form $\omega$ extends as a (differential) 2-form (Corollary \ref{coro: Main}).

The paper concludes with several acyclic examples, both singular and smooth, as well as a non-acyclic counterexample which shows that the Weil-Petersson form $\omega$ does not extend for all cluster varieties.

\section{Cluster Algebras}

We begin by briskly reviewing the definition of a cluster algebra (of geometric type).  The reader looking for a more useful introduction is directed to \cite{GSV10}.

\subsection{Cluster algebras}  A rank $n$ cluster algebra is a commutative algebra with a canonical set of generators (\emph{cluster variables}) such that the complete set of generators may be recursively constructed from special $n$-element subsets (\emph{clusters}).  The input data is an integral $m\times n$ matrix $A$ which is \emph{skew-symmetrizable}; that is, there is an integral diagonal $m\times m$ matrix $D$ such that the restriction of $DA$ to the first $m$ columns is skew-symmetric.  

Given a pair (called a \emph{seed}) of an $m\times n$ skew-symmetrizable matrix $B$ and a set of variables $\mathbf{f}=\{f_1,f_2,...,f_n\}$, the \emph{mutation at $k\in \{1,...,m\}$} of $(B,\mathbf{f})$ is the new seed defined by
\begin{itemize}
\item For $1\leq i\leq m, 1\leq j\leq n$, set 
\[ \mu_k(B)_{ij} := \left\{\begin{array}{cc}
-B_{ij} & \text{if } k=i\text{ or }j\\
B_{ij} + \frac{|B_{ik}|B_{kj}+B_{ik}|B_{kj}|}{2} & \text{if } k\neq i,j
\end{array}\right\}\]
\item For $i\neq k$, set $\mu_k(f_i):=f_i$, and set
\[ \mu_k(f_k):=
\left(\prod_{j,\;B_{ij}>0} f_j^{B_{ij}}+\prod_{j,\;B_{ij}<0} f_j^{-B_{ij}}\right)f_i^{-1}\]
\end{itemize}

The new matrix $\mu_k(B)$ is also skew-symmetrizable, and so the procedure of mutation may be iterated at any sequence in the indices $\{1,...,m\}$.

Any seed $(B,\mathbf{f})$ obtained from $(A,\mathbf{x}=\{x_1,...x_n\})$ by a sequence of mutations is called \emph{equivalent} to $(A,\mathbf{x})$.  The \textbf{clusters} are the sets $\mathbf{f}=\{f_1,...,f_n\}$ equivalent to $\mathbf{x}$, and the \textbf{cluster variables} are elements of the union of the clusters.  Note that, for $m<i\leq n$, the element $x_i$ is never mutated and so $f_i=x_i$; these are the \textbf{frozen variables} and they are in every cluster.

\begin{defn}The (complex) \textbf{cluster algebra} $\A(A)$ associated to $A$ is the $\mathbb{C}$-subalgebra of $\mathbb{C}(x_1,x_2,...,x_n)$ generated by the set of all cluster variables, together with the inverses of the frozen variables, $x_{m+1}^{-1},...,x_n^{-1}$.
\end{defn}

\begin{rem}
We require an algebraically closed field for geometric applications, though any algebraically closed field will suffice.  
\end{rem}

For a fixed seed $(B,\mathbf{f})$ and cluster variable $f_i\in \mathbf{f}$, we use the somewhat vague notation $f_i':=\mu_i(f_i)$.  Then it is immediate that the following relation holds in the cluster algebra.
\begin{equation}\label{eqn: Cluster}
f_if_i' = \prod_{j,\;B_{ij}>0} f_j^{B_{ij}}+\prod_{j,\;B_{ij}<0} f_j^{-B_{ij}}
\end{equation}

A fundamental lemma is the Laurent Phenomenon.  For $\mathbf{f}$ a cluster, let $\mathbb{C}[\mathbf{f}^{\pm1}]:= \mathbb{C}[f_1^{\pm1},f_2^{\pm1},...,f_n^{\pm1}]$, the algebra of Laurent polynomials in $\mathbf{f}$.
\begin{lemma}\cite[Theorem 3.1]{FZ02} For any cluster $\mathbf{f}$ in $\A(A)$, we have $\A(A)\subset \mathbb{C}[\mathbf{f}^{\pm1}]$.
\end{lemma}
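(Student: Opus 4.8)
The plan is to prove the statement by establishing that every cluster variable is a Laurent polynomial in the fixed cluster $\mathbf{f}$. Since $\A(A)$ is generated as a $\mathbb{C}$-algebra by the cluster variables together with the inverses of the frozen variables, and each frozen variable is one of the $f_j$ (so its inverse already lies in $\mathbb{C}[\mathbf{f}^{\pm1}]$), it suffices to treat the cluster variables. Each such variable is obtained from $\mathbf{f}$ by a finite sequence of mutations, so I would induct on the minimal number $N$ of mutations needed to reach it. The base cases are immediate: for $N=0$ the variable is some $f_j\in\mathbf{f}$, and for $N=1$ the exchange relation \eqref{eqn: Cluster} exhibits the mutated variable as a binomial in the $f_j$ divided by a single $f_k$, which is a Laurent monomial times a polynomial, hence an element of $\mathbb{C}[\mathbf{f}^{\pm1}]$.

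The inductive step is where the difficulty lies. A cluster variable $z$ at distance $N$ is obtained by one further mutation from a cluster $\mathbf{f}'$ reachable in $N-1$ steps; say $\mathbf{f}'$ differs from $\mathbf{f}$ by an initial mutation at $k$. By the induction hypothesis $z$ is a Laurent polynomial in $\mathbf{f}'$, and since $\mathbf{f}'$ and $\mathbf{f}$ share the variables $\{f_j : j\neq k\}$ while the one remaining variable $f_k':=\mu_k(f_k)$ is itself (by the $N=1$ case) a Laurent polynomial in $\mathbf{f}$, substitution places $z$ in $\mathbb{C}[\mathbf{f}^{\pm1}][(f_k')^{-1}]$. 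The obstacle is that this naive substitution may introduce spurious poles along the hypersurface $f_k'=0$. To cancel them I would show that $z$ is simultaneously a Laurent polynomial in $\mathbf{f}$ and in $\mathbf{f}'$, and then invoke the algebraic identity, computed inside $\mathbb{C}(x_1,\ldots,x_n)$,
\[ \mathbb{C}[\mathbf{f}^{\pm1}]\cap\mathbb{C}[(\mathbf{f}')^{\pm1}] = R[f_k,f_k'], \qquad R:=\mathbb{C}[f_j^{\pm1} : j\neq k], \]
which holds because $f_kf_k'$ equals a binomial $P\in R$ that is coprime to each of $f_k$ and $f_k'$; hence the common zero locus $\{f_k=f_k'=0\}$ has codimension two, and a function regular away from $f_k$ and also regular away from $f_k'$ extends across it, so it can have a pole along neither. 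This forces $z\in R[f_k,f_k']\subset\mathbb{C}[\mathbf{f}^{\pm1}]$.

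The crux, and the step I expect to be the main obstacle, is verifying the hypothesis that $z$ is Laurent in \emph{both} $\mathbf{f}$ and $\mathbf{f}'$ simultaneously, so that the intersection formula applies. This cannot be read off from a single mutation path and requires comparing two routes from the initial data to $z$ that agree up to a controlled reordering of mutations. This is precisely the content of Fomin and Zelevinsky's Caterpillar Lemma: by choosing a suitable tree of clusters related by mutations at only two indices $i$ and $j$, one arranges a local three-cluster configuration in which the pairwise intersection argument above can be applied repeatedly, propagating Laurentness outward from the root. Tracking how the exchange relations at $i$ and $j$ interact is the technical heart of that lemma. Since the statement is quoted directly as \cite[Theorem 3.1]{FZ02}, for the purposes of this paper I would simply cite their argument rather than reproduce it.
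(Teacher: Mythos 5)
The paper offers no proof of this lemma at all---it is quoted verbatim from \cite[Theorem 3.1]{FZ02}---and your proposal ultimately does the same thing, deferring to Fomin and Zelevinsky's argument, so you match the paper's approach exactly. Your accompanying sketch is a fair outline of their strategy (induction on mutation distance, coprimality of exchange binomials, and the codimension-two intersection of adjacent Laurent rings), and you correctly flag that the two-cluster intersection alone leaves the inductive step circular---one needs the Caterpillar Lemma's three-cluster configuration to close it---so the decision to cite rather than reproduce is the right one here.
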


\subsection{Acyclic cluster algebras}

While the structure of a general cluster algebra may be very mysterious, there is an explicit structure theorem in the case of \emph{acyclic} cluster algebras.

Given a skew-symmetrizable matrix $A$, a \emph{directed cycle} is a sequence of indices $i_1,i_2,...i_j\in \{1,...,m\}$ such that $i_j=i_1$ and $A_{i_ki_{k+1}}>0$ for all $1\leq k<j$.  A skew-symmetrizable matrix is \emph{acyclic} if it has no directed cycles.  By a standard abuse of notation, let $x_i':=\mu_i(x_i)$ be the mutation from the initial cluster $\mathbf{x}$, and let $P_i:=x_ix_i'$, which is a binomial in the variables $\{x_1,...,x_n\}$.

\begin{lemma}\cite[Corollary 1.21]{BFZ05}\label{lemma: AcyclicStr}
Let $A$ be an acyclic skew-symmetrizable matrix.  Then 
\[\A(A)=\mathbb{C}[x_1,x_2,...x_m,x_{m+1}^{\pm1},...,x_n^{\pm1}, x_1',x_2',...x_m']/ \langle x_1x_1'-P_1,...x_mx_m'-P_m\rangle\] 
\end{lemma}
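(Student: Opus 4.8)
The plan is to sandwich the right-hand ring, which I denote $R$, between an explicitly generated subalgebra of $\A(A)$ and an intersection of Laurent rings, following the upper-bound/lower-bound framework of \cite{BFZ05}. Let $\mathcal{L} \subseteq \A(A)$ be the subalgebra generated by the initial cluster variables $x_1,\dots,x_m$, their first mutations $x_1',\dots,x_m'$, and the invertible frozen variables $x_{m+1}^{\pm1},\dots,x_n^{\pm1}$, and set $\mathcal{U} := \mathbb{C}[\mathbf{x}^{\pm1}] \cap \bigcap_{k=1}^{m} \mathbb{C}[\mu_k(\mathbf{x})^{\pm1}]$. The exchange relation \eqref{eqn: Cluster} shows that $x_ix_i' = P_i$ holds in $\A(A)$, so the generators of $R$ map to the generators of $\mathcal{L}$, giving a surjective $\mathbb{C}$-algebra homomorphism $\phi\colon R \twoheadrightarrow \mathcal{L}$. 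Since $\mathcal{L} \subseteq \A(A)$ trivially and $\A(A) \subseteq \mathcal{U}$ by the Laurent Phenomenon, the whole statement follows once I prove (i) that $\phi$ is injective, and (ii) that $\mathcal{U} \subseteq \mathcal{L}$; for then $\mathcal{L} \subseteq \A(A) \subseteq \mathcal{U} \subseteq \mathcal{L}$ collapses to a chain of equalities and $\phi$ becomes the asserted isomorphism.

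For (i) I would produce a spanning set of $R$ and verify that its image is linearly independent. Since each $P_i$ is a binomial in $x_1,\dots,x_n$ involving neither $x_i$ nor any primed variable, the relation $x_ix_i' = P_i$ lets one rewrite any monomial in the $x_i,x_i'$ so that, for every $i$, at most one of $x_i,x_i'$ occurs; these \emph{standard monomials} span $R$ over $\mathbb{C}[x_{m+1}^{\pm1},\dots,x_n^{\pm1}]$. Injectivity of $\phi$ then amounts to the linear independence of their images in $\mathbb{C}[\mathbf{x}^{\pm1}]$ after the substitution $x_i' \mapsto P_i/x_i$. This is exactly where acyclicity enters: after reordering the indices so that every positive entry $A_{ij}>0$ has $i<j$ (possible precisely because $A$ has no directed cycles), one reads off from the binomials $P_i$ a dominant-term (equivalently, a multidegree) argument under which distinct standard monomials acquire distinct extremal exponents, ruling out any cancellation.

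Part (ii), the inclusion $\mathcal{U} \subseteq \mathcal{L}$, is the main obstacle. Given $y \in \mathcal{U}$, written as a Laurent polynomial in $\mathbf{x}$, the only hypotheses available are that $y$ stays Laurent after each of the $m$ elementary mutations $\mu_k$, and each such condition constrains only how $x_k$ may appear in the denominator of $y$ relative to the binomial $P_k$. The strategy is to clear denominators one variable at a time, processing the indices in the acyclic order of part (i): Laurentness of $y$ in $\mu_k(\mathbf{x})$ forces the $x_k$-denominator of $y$ to be absorbed by a power of $P_k$, and the acyclic ordering guarantees that treating $x_k$ does not reintroduce denominators in the already-cleared variables. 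Iterating shows that $y$ times a suitable monomial in the $P_k$ is a genuine polynomial in the $x_i,x_i'$ and frozen variables, whence $y \in \mathcal{L}$. The delicate point, and the reason a general (non-acyclic) seed can fail, is that the mutation conditions are purely local to adjacent clusters, so only the absence of oriented cycles allows these local Laurent constraints to be bootstrapped into a global factorization of the denominator.
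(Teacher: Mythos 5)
The paper offers no proof of this lemma: it is imported verbatim as \cite[Corollary 1.21]{BFZ05}, so the only meaningful comparison is with the argument in that reference. Your skeleton is exactly theirs: the lower bound $\mathcal{L}$, the upper bound $\mathcal{U}$, the sandwich $\mathcal{L}\subseteq\A(A)\subseteq\mathcal{U}$ via the Laurent phenomenon, and a standard-monomial/leading-term argument for the injectivity of $\phi$ (their Theorem 1.20). Your part (i) is a reasonable sketch of the right idea, including the topological ordering of the mutable indices that acyclicity provides.

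Part (ii), however, is false as stated, and this is a genuine gap rather than a missing detail. The inclusion $\mathcal{U}\subseteq\mathcal{L}$ is \cite[Theorem 1.18]{BFZ05}, which carries a \emph{coprimality} hypothesis on the exchange binomials $P_1,\dots,P_m$ that an acyclic seed need not satisfy. Concretely, take the acyclic quiver $1\leftarrow 3\rightarrow 2$ (type $A_3$, $m=n=3$), so that $P_1=P_2=x_3+1$. The element $y=(x_3+1)/(x_1x_2)$ is Laurent in $\mathbf{x}$, equals $x_1'/x_2$ and $x_2'/x_1$ in the two corresponding adjacent clusters, and is a polynomial over a monomial in the cluster $\{x_1,x_2,x_3'\}$, so $y\in\mathcal{U}$; yet one checks (via the standard-monomial basis of $\mathcal{L}$, or via denominator vectors in finite type, noting the only cluster monomial with denominator $x_1x_2$ is $x_1'x_2'=(x_3+1)^2/(x_1x_2)$) that $y\notin\A(A)$. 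Hence $\A(A)\subsetneq\mathcal{U}$ and your chain cannot collapse; \cite{BFZ05} must, and do, handle the non-coprime acyclic case by a separate reduction rather than by the sandwich alone. Two further points: the final inference of your (ii) --- that $y$ times a monomial in the $P_k$ lying in the polynomial ring implies $y\in\mathcal{L}$ --- is a non sequitur, since the $P_k$ are not units in $\mathcal{L}$; and the assertion that processing indices in acyclic order ``does not reintroduce denominators'' is precisely the content of the multi-page induction in Section 6 of \cite{BFZ05}, not something that can be asserted in a sentence.
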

The main consequence for our purposes is that $\A(A)$ is finitely-generated.

\begin{defn}\cite{BFZ05}
For a skew-symmetrizable matrix $A$, the associated cluster algebra $\A(A)$ is \textbf{acyclic} if there is some acyclic matrix $B$ equivalent to $A$.
\end{defn}

\section{Cluster Varieties}

\subsection{Cluster varieties}

For $A$ acyclic, the algebra $\A(A)$ is a finitely-generated integral domain.  Hence, $\A(A)$ is the coordinate ring of the affine subvariety $\X(A)$ of $\mathbb{C}^{2n}$ defined by the equations
\[ x_ix_i'=P_i,\;\;\;(1\leq i\leq m);\;\;\; x_ix_i^{-1}=1,\;\;\; (m<i\leq n)\]
Different choices of acyclic cluster in $\A(A)$ will determine different embeddings of the same affine variety $\X(A)$, which can be identified with the set of maximal ideals in the algebra $\A(A)$.

\begin{defn}
The affine variety $\X(A)$ is the \textbf{cluster variety} of $A$.
\end{defn}

\begin{rem}
If $\A(A)$ is not acyclic, the `cluster variety of $\A(A)$' may still be defined as the space of maximal ideals.  However, if $\A(A)$ is not finitely-generated, then this does not embed inside any finite-dimensional affine space, and so it fails to be a true variety.
\end{rem}


Given a cluster $\mathbf{f}$, the inclusion 
\[\A(A)\hookrightarrow \mathbb{C}[\mathbf{f}^{\pm1}]\]
is dual to an inclusion of varieties
\[ (\mathbb{C}^*)^n\hookrightarrow \X(A)\]
The image of this map is the \textbf{cluster torus} $\mathbb{T}_{\mathbf{f}}$ associated to $\mathbf{f}$.  Concretely, $\mathbb{T}_{\mathbf{f}}$ is the set of points in $\X(A)$ where the coordinate functions $f_1,f_2,...,f_n$ are non-zero.  At those points, the value of the other elements of $\A(A)$ may be determined by their inclusion into $\mathbb{C}[\mathbf{f}^{\pm1}]$; the requirement that the $f_i$ are non-zero is necessary for the denominators of these Laurent polynomials not to vanish.

The union of all the cluster tori is a subvariety denoted $\X'(A)$.\footnote{It may not be an affine variety; it is only \emph{quasi-affine}.}  As a union of smooth varieties, $\X'(A)$ is again smooth.  For this reason, it is called the \textbf{cluster manifold}.  However, this name is somewhat misleading, since there may be other smooth points in $\X(A)$.

\subsection{The deep variety} The complement $\X_{d}(A):=\X(A) - \X'(A)$ is called the \textbf{deep variety}, and it can be smooth, singular or empty.  Even for acyclic cluster varieties, the structure of the deep variety is poorly understood.  Many cluster variables must vanish at deep points, and so it is worth understanding what subsets of cluster variables can simultaneously vanish.

\begin{lemma}
Let $p\in \X(A)$, and $(B,\mathbf{f})$ be a seed equivalent to $(A,\mathbf{x})$.  If $f_a, f_b\in \mathbf{f}$ with $B_{ab}\neq 0$ and $f_a(p)=f_b(p)=0$, then there is a directed cycle $(f_{k_1},f_{k_2},...f_{k_\alpha})$ of cluster variables in $\mathbf{f}$ such that 
\[f_{k_1}(p)=f_{k_2}(p)=...=f_{k_\alpha}(p)=0\]
\end{lemma}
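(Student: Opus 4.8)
The plan is to use the exchange relation \eqref{eqn: Cluster} to propagate the vanishing condition backwards along directed edges, and then to invoke finiteness to extract a cycle. First I would record two basic facts. Since the frozen variables are units in $\A(A)$, they are nonvanishing on all of $\X(A)$; hence any cluster variable vanishing at $p$ must be a mutable one (index $\leq m$). This applies in particular to $f_a$ and $f_b$, and it guarantees that the cycle I build lives among the mutable indices where the directed-cycle notion is defined. Second, since $B$ is skew-symmetrizable, for mutable indices $i \neq j$ the entries $B_{ij}$ and $B_{ji}$ have opposite signs; so after possibly swapping $a$ and $b$ I may assume $B_{ab} > 0$, i.e. that there is a directed edge $a \to b$.

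The engine of the argument is a single propagation step, which I would isolate as follows. Suppose $v$ is a mutable index with $f_v(p) = 0$ that possesses an outgoing edge $v \to w$ (that is, $B_{vw} > 0$) to a vanishing vertex, $f_w(p) = 0$. Applying \eqref{eqn: Cluster} at $v$, the left side $f_v f_v'$ vanishes at $p$, and the positive monomial $\prod_{j,\;B_{vj}>0} f_j^{B_{vj}}$ vanishes as well since it contains the factor $f_w$. Therefore the negative monomial $\prod_{j,\;B_{vj}<0} f_j^{-B_{vj}}$ must also vanish at $p$; and because a monomial in an integral domain vanishes only if one of its factors does, there is an index $u$ with $B_{vu} < 0$ and $f_u(p) = 0$. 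Skew-symmetry on the mutable part converts $B_{vu} < 0$ into $B_{uv} > 0$, i.e. an incoming edge $u \to v$, while $f_u(p) = 0$ forces $u$ to be mutable. Thus from a vanishing vertex with a vanishing out-neighbor I obtain a fresh vanishing in-neighbor.

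With this step in hand I would build a backward chain. Starting from the edge $a \to b$ with both endpoints vanishing, the step yields a vertex $k_2 \to a$ with $f_{k_2}(p) = 0$; now $k_2$ itself has a vanishing out-neighbor (namely $a$), so the step applies again to produce $k_3 \to k_2$, and so on. This generates an infinite sequence $k_1 = a, k_2, k_3, \dots$ of mutable indices, each vanishing at $p$, with $B_{k_{i+1} k_i} > 0$ for all $i$. Since there are only finitely many mutable indices, the pigeonhole principle forces a repetition $k_i = k_j$ with $i < j$, and the segment $k_j \to k_{j-1} \to \cdots \to k_i = k_j$ is then a directed cycle all of whose vertices vanish at $p$, as required.

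I expect the only genuine subtlety — and the step most worth stating carefully — to be the propagation step, specifically the bookkeeping that the negative monomial is \emph{forced} to vanish (using that the two monomials sum to zero and one of them already vanishes) together with the sign conversion via mutable skew-symmetry. Everything else is formal: the reduction to $B_{ab} > 0$, the observation that frozen variables are units, and the pigeonhole extraction of the cycle are all routine once the propagation step is in place.
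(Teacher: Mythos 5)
Your proof is correct and is essentially the paper's argument run in reverse: the paper applies the exchange relation at the head of the edge to propagate the vanishing forward (finding a vanishing out-neighbor each time), while you apply it at the tail to propagate backward (finding a vanishing in-neighbor); both then extract the cycle by pigeonhole. Your explicit remarks that frozen variables are units (so the new vertex is mutable) and that the sign flip uses skew-symmetrizability are points the paper leaves implicit, but the substance is the same.
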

\begin{proof}
Possibly switching the labels $a$ and $b$, we may assume that $B_{ab}>0$; set $f_{k_1}=f_a$ and $f_{k_2}=f_b$.  Let $f_b'$ denote the mutation of $f_b$ in $\mathbf{f}$.  Equation \eqref{eqn: Cluster} when evaluated at $p$.
\[f_b(p)f_b'(p)=\prod_{j,\;B_{bj}>0} [f_j(p)]^{B_{bj}}+\prod_{j,\;B_{bj}<0} [f_j(p)]^{-B_{bj}}\]
Since $f_b(p)=0$, the left side vanishes.  Since $f_a(p)=0$ and $B_{ba}=-B_{ab}< 0$, the second term on the right vanishes.  Hence, there is some $f_{c}$ such that $B_{bc}>0$ and $f_c(p)=0$; set $f_{k_3}=f_c$.

Iterating this procedure gives a list $f_{k_1},f_{k_2},...$.  Since there are a finite number of vertices, eventually this sequence must wrap around, producing the desired oriented cycle.
\end{proof}

\begin{coro}\label{coro: NoAdj}
Let $p\in \X(A)$, and $(B,\mathbf{f})$ be an \emph{acyclic} seed equivalent to $(A,\mathbf{x})$. If $f_a, f_b\in \mathbf{f}$ with $f_a(p)=f_b(p)=0$, then $B_{ab}= 0$.
\end{coro}

\section{K\"ahler Forms}

We review the notion of a K\"ahler form (also called a K\"ahler differential).  This is an algebraic generalization of the notion of a differential form (with regular coefficients) which can be defined at singularites.  

\subsection{The K\"ahler forms of an algebra}

Let $R$ be a commutative $\mathbb{C}$-algebra.  The $R$-module of \textbf{K\"ahler forms} $\Omega(R)$ is the $R$-module generated by symbols of the form $\{dr|\,r\in R\}$, with relations given by
\[ d(\lambda r)=\lambda(dr), \;\;\;  d(r+s) = dr+ds,\;\;\;  d(rs) = r(ds)+s(dr);\;\;\; \lambda\in \mathbb{C}, \;r,s\in R\]
There is a natural linear map $d:R\dashrightarrow \Omega$ which sends $r$ to $dr$.  This map is \underline{not} an $R$-module map, but it is a \emph{derivation}.\footnote{In fact, $d$ is the \emph{universal} derivation, in the sense that any other derivation $\delta:R\dashrightarrow M$ can be uniquely factored through $d$.}  Define the $R$-module of (regular) \textbf{K\"ahler $i$-forms} by
\[\Omega^i(R):=\wedge^i_R\Omega(R),\]
the $i$th exterior power of $\Omega(R)$.  

These constructions commute with localization; that is, if $S$ is a localization of $R$, then $\Omega(S)=S\otimes_R\Omega(R)$ and $\Omega^i(S)=S\otimes_R \Omega^i(R)$.  If $S$ is the fraction field of $R$, the module $\Omega^i(S)=S\otimes_R\Omega^i(R)$ is called the module of \textbf{rational K\"ahler $i$-forms}.

\subsection{K\"ahler forms in algebraic geometry}

When $R$ is the coordinate ring $\O_X$ of a smooth affine variety $X$, these modules have a geometric meaning.  Let $\mathcal{T}_X$ denote the $\O_X$-module of regular vector fields on $X$, and let $\mathcal{T}^*_X=Hom_X(\mathcal{T}_X,\O_X)$ denote the $\O_X$-module of regular 1-forms on $X$.  Then, the K\"ahler $i$-forms can be canonically identified with the $R$-module of regular $i$-forms.  
\begin{lemma}\label{lemma: KtoD}
Let $\O_X$ be the coordinate ring of a smooth affine variety $X$.  Then $\Omega(\O_X)=\mathcal{T}^*_X$, and so $\Omega^i(\O_X)=\wedge^i_X\mathcal{T}^*_X$.
\end{lemma}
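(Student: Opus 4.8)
The plan is to recognize both modules as built from derivations and then invoke the universal property of K\"ahler differentials, deploying smoothness only at the final step. First I would observe that a regular vector field on $X$ is precisely a $\CC$-linear derivation $\O_X \dashrightarrow \O_X$, so that $\mathcal{T}_X = \mathrm{Der}_\CC(\O_X, \O_X)$ canonically. Next I would apply the universal property of the K\"ahler module recorded above: for any $\O_X$-module $M$, composition with the universal derivation $d$ gives a natural isomorphism $\mathrm{Hom}_{\O_X}(\Omega(\O_X), M) \cong \mathrm{Der}_\CC(\O_X, M)$. Taking $M = \O_X$ then yields
\[ \mathcal{T}_X = \mathrm{Der}_\CC(\O_X, \O_X) \cong \mathrm{Hom}_{\O_X}(\Omega(\O_X), \O_X) = \Omega(\O_X)^*. \]
Dualizing once more gives $\mathcal{T}^*_X = \Omega(\O_X)^{**}$, so the asserted identity $\Omega(\O_X) = \mathcal{T}^*_X$ reduces to showing that the canonical biduality map $\Omega(\O_X) \to \Omega(\O_X)^{**}$ is an isomorphism.

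This is where smoothness enters, and it is the heart of the argument. For a smooth affine variety of dimension $d$, the Jacobian criterion shows that $\Omega(\O_X)$ is a locally free $\O_X$-module of rank $d$; I would verify this by localizing at each maximal ideal $\mathfrak{m}$ and checking that regularity of the local ring $\O_{X,\mathfrak{m}}$ makes $\Omega(\O_X)_{\mathfrak{m}}$ free on a system of local parameters. Since a finitely-generated locally free module over a Noetherian ring is reflexive, the biduality map is then an isomorphism, and combining this with the identification above gives $\Omega(\O_X) \cong \mathcal{T}^*_X$. Finally, because exterior powers commute with this isomorphism, $\Omega^i(\O_X) = \wedge^i_{\O_X} \Omega(\O_X) \cong \wedge^i_{\O_X} \mathcal{T}^*_X = \wedge^i_X \mathcal{T}^*_X$.

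The main obstacle is the local freeness of $\Omega(\O_X)$ on a smooth variety. Everything preceding it is formal — the universal property and the derivation description of vector fields hold for an arbitrary $\CC$-algebra — but the passage from the double dual $\Omega(\O_X)^{**}$ back to $\Omega(\O_X)$ genuinely requires reflexivity, which can fail at a singular point precisely because $\Omega$ need no longer be locally free there. I would therefore isolate the Jacobian/local-parameter computation as the key input, citing a standard reference, and present the remainder of the identification as a formal consequence of the universal property.
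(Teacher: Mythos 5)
Your argument is correct: the identification $\mathcal{T}_X = \mathrm{Der}_\CC(\O_X,\O_X) \cong \mathrm{Hom}_{\O_X}(\Omega(\O_X),\O_X)$ via the universal property, the reduction of the claim to reflexivity of $\Omega(\O_X)$ (given the paper's definition $\mathcal{T}^*_X = \mathrm{Hom}_X(\mathcal{T}_X,\O_X)$), and the use of smoothness to get local freeness and hence biduality is the standard route, and you correctly isolate local freeness as the one place smoothness is genuinely needed. The paper states this lemma as background with no proof at all, so there is nothing to compare against; your write-up supplies a proof the paper omits, and it is sound.
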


Thus, K\"ahler $i$-forms coincide with regular $i$-forms on smooth varieties.  However, K\"ahler $i$-forms can be defined for singular varieties, when the notion of an $i$-form breaks down in the absence of a well-defined tangent space.

If $X$ is an affine variety, with a open affine subvariety $Y$, then the localization map
\[ \Omega^i(\O_X)\rightarrow \O_Y\otimes_X\Omega^i(O_X)=\Omega^i(\O_Y)\]
allows K\"ahler $i$-forms on $X$ to be restricted to K\"ahler $i$-forms on $Y$.  Combined with the above lemma, we see that K\"ahler $i$-forms on $X$ can be restricted to $i$-forms on $Y$ when $Y$ is smooth.
\begin{coro}
Let $X$ be an affine variety, and let $Y$ be a smooth open affine subvariety.  Then there is a natural restriction map $\Omega^i(\O_X)\rightarrow \wedge^i_Y\mathcal{T}^*_Y$.
\end{coro}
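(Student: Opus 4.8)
The plan is simply to compose the two maps assembled in the discussion immediately preceding the statement. The open immersion $Y\hookrightarrow X$ of affine varieties is dual to the restriction homomorphism of coordinate rings $\O_X\to\O_Y$, and because $Y$ is open this exhibits $\O_Y$ as a localization of $\O_X$. The compatibility of Kähler forms with localization then gives the base-change identity $\Omega^i(\O_Y)=\O_Y\otimes_{\O_X}\Omega^i(\O_X)$, and the unit of this base change is exactly the natural restriction map
\[ \Omega^i(\O_X)\longrightarrow \O_Y\otimes_{\O_X}\Omega^i(\O_X)=\Omega^i(\O_Y). \]
This is the same map displayed just above the corollary, so the first half of the construction is already in hand.

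For the second half I would invoke smoothness. Since $Y$ is a smooth affine variety, Lemma \ref{lemma: KtoD} supplies the canonical identification $\Omega(\O_Y)=\mathcal{T}^*_Y$, and hence $\Omega^i(\O_Y)=\wedge^i_Y\mathcal{T}^*_Y$. Composing this identification with the restriction map above yields the asserted map $\Omega^i(\O_X)\to\wedge^i_Y\mathcal{T}^*_Y$. Its naturality is inherited step by step: the universal derivation $d$ is functorial in $R$, the exterior power $\wedge^i_R$ is functorial, and the identification of Lemma \ref{lemma: KtoD} is canonical, so the composite depends on no choices.

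The only point requiring care — and the step I would treat as the main obstacle — is justifying the base-change identity for a \emph{general} open affine subvariety, since $\O_Y$ need not be a localization of $\O_X$ at a single element, nor even at a multiplicative subset. I would resolve this by covering $Y$ with principal open subsets $D(f)=\{f\neq 0\}$, on each of which $\O_{D(f)}=(\O_X)_f$ is an honest localization and the stated localization property applies verbatim; the identity for $Y$ then follows because these restrictions are compatible on overlaps and the relative differentials $\Omega(\O_Y/\O_X)$ vanish for an open immersion. Alternatively, one may take the corollary's restriction map to be the map induced by the functoriality of Kähler differentials along $\O_X\to\O_Y$, which exists for an arbitrary ring homomorphism and agrees with the localization map in the cases at hand; this formulation sidesteps the localization hypothesis entirely. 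In either case the construction is canonical, establishing the claimed natural restriction map.
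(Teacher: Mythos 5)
Your proof is correct and follows exactly the route the paper intends: the corollary is stated without proof because it is the composite of the localization map $\Omega^i(\O_X)\to\Omega^i(\O_Y)$ displayed just above it with the identification $\Omega^i(\O_Y)=\wedge^i_Y\mathcal{T}^*_Y$ from Lemma \ref{lemma: KtoD}, which is precisely what you do. Your additional care about whether $\O_Y$ is literally a localization of $\O_X$ for a general open affine $Y$ addresses a point the paper silently glosses over, and both of your resolutions (covering by principal opens, or using functoriality of K\"ahler differentials along $\O_X\to\O_Y$) are sound.
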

Therefore, even for singular varieties, the K\"ahler forms are differential forms on the smooth locus, with a formal algebraic extension to the singularities.

\section{The Weil-Petersson Form.}
From now on, a fixed skew-symmetrizable matrix $A$ will be chosen, and suppressed from notation.  We define the Weil-Petersson form introduced in \cite{GSV05}, a 2-form $\omega$ defined on the cluster manifold $\X'$.  Algebraically, this 2-form may be interpreted as a rational K\"ahler 2-form $\omega$ on the cluster algebra $\A$.  We then show that, in the acyclic case, this $\omega$ is a (regular) K\"ahler 2-form on $\A$. 

\subsection{The Definition of $\omega$.}

Let $(B, \mathbf{f})$ be a seed equivalent to $(A,\mathbf{x})$.  Define a rational K\"ahler 2-form $\omega_{\mathbf{f}}$ on the cluster algebra $\A$ by
\[ \omega_{\mathbf{f}} :=\sum_{i,j}\frac{B_{ij}}{f_if_j}df_i\wedge df_j\]
Since the denominator is a monomial in $\mathbf{f}$, this is a K\"ahler 2-form in $\mathbb{C}[\mathbf{f}^{\pm1}]\otimes_{\A}\Omega(\A)$.

Let $\mathbb{T}_{\mathbf{f}}$ be the cluster tori associated to $\mathbf{f}$, whose coordinate ring is naturally $\mathbb{C}[\mathbf{f}^{\pm1}]$.  Because $\mathbb{T}_{\mathbf{f}}$ is smooth, by Lemma \ref{lemma: KtoD}, $\omega_{\mathbf{f}}$ is equivalent to a regular 2-form on $\mathbb{T}_{\mathbf{f}}$.  

\begin{lemma}\cite[Theorem 2.1]{GSV05} For two clusters $\mathbf{f}$ and $\mathbf{f}'$, the corresponding rational K\"ahler 2-forms $\omega_{\mathbf{f}}$ and $\omega_{\mathbf{f}'}$ agree.  Hence, $\omega:=\omega_{\mathbf{f}}$ is a well-defined rational K\"ahler 2-form on $\A$ independant of a choice of cluster.
\end{lemma}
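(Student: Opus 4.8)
The plan is to use the fact that any two clusters are joined by a finite sequence of single mutations, so that by transitivity it suffices to prove $\omega_{\mathbf{f}} = \omega_{\mathbf{f}'}$ in the case $\mathbf{f}' = \mu_k(\mathbf{f})$ for a single index $k$. Throughout I would pass to the logarithmic generators $\eta_i := df_i/f_i$ of the rational K\"ahler forms, in terms of which $\omega_{\mathbf{f}} = \sum_{i,j}B_{ij}\,\eta_i\wedge\eta_j$. Since $df_i\wedge df_j = -df_j\wedge df_i$, only the antisymmetric part of $B$ enters, and the computation below is cleanest when $B$ is skew-symmetric; the general skew-symmetrizable case follows by carrying the skew-symmetrizing coefficients through the same manipulation.

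The whole argument rests on two transformation rules for the log-generators under $\mu_k$. For $i\ne k$ the variable is unchanged, so $\eta_i' = \eta_i$. For $i=k$, the exchange relation \eqref{eqn: Cluster} reads $f_kf_k' = P_k^+ + P_k^-$ with $P_k^+ := \prod_{j,\,B_{kj}>0}f_j^{B_{kj}}$ and $P_k^- := \prod_{j,\,B_{kj}<0}f_j^{-B_{kj}}$, so taking logarithmic derivatives gives
\[ \eta_k' = -\eta_k + \frac{d(P_k^+ + P_k^-)}{P_k^+ + P_k^-}. \]
I would then substitute these into $\omega_{\mathbf{f}'} = \sum_{i,j}\mu_k(B)_{ij}\,\eta_i'\wedge\eta_j'$ and separate the terms containing the index $k$ from those that do not. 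The piece of $\eta_k'$ equal to $-\eta_k$, together with the sign flip $\mu_k(B)_{kj}=-B_{kj}$, reproduces exactly the $k$-terms of $\omega_{\mathbf{f}}$; the quadratic correction in the rule for $\mu_k(B)_{ij}$ (with $i,j\ne k$) then produces a residual $2$-form that must be matched against the contribution of $d\log(P_k^+ + P_k^-)$.

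The main obstacle is precisely this last matching, because $d\log(P_k^+ + P_k^-)$ is the differential of a logarithm of a \emph{sum} and so involves the genuinely rational function $p := P_k^+/(P_k^+ + P_k^-)$, which cannot survive in the polynomial form $\omega_{\mathbf{f}}$. The key simplification I would isolate is that, writing $\alpha^+ := \sum_{B_{kj}>0}B_{kj}\eta_j$ and $\alpha^- := -\sum_{B_{kj}<0}B_{kj}\eta_j$, one has $d\log(P_k^+ + P_k^-) = p\,\alpha^+ + (1-p)\,\alpha^-$; when this is wedged against $\sum_j B_{kj}\eta_j = \alpha^+ - \alpha^-$, the self-wedges $\alpha^\pm\wedge\alpha^\pm$ vanish and the coefficients of the surviving term combine as $p + (1-p) = 1$, so $p$ disappears and only the polynomial $2$-form $\alpha^+\wedge\alpha^-$ remains. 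The proof then finishes with a routine sign-by-sign check that the quadratic correction term $\tfrac12\bigl(|B_{ik}|B_{kj}+B_{ik}|B_{kj}|\bigr)$ contributes precisely the opposite multiple of $\alpha^+\wedge\alpha^-$, so that the two residual contributions cancel and $\omega_{\mathbf{f}'} = \omega_{\mathbf{f}}$.
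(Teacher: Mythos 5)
The paper offers no proof of this lemma at all: it is quoted directly from \cite{GSV05}, so there is no internal argument to compare yours against. That said, your proposal is correct and is essentially the proof in the cited source. Reducing to a single mutation $\mu_k$, working with the logarithmic generators $\eta_i = df_i/f_i$, and observing that $d\log(P_k^+ + P_k^-) = p\,\alpha^+ + (1-p)\,\alpha^-$ loses its dependence on the rational function $p$ once it is wedged against $\alpha^+ - \alpha^-$ (leaving $\pm\alpha^+\wedge\alpha^-$) is exactly the mechanism that makes the invariance work, and your final cancellation does check out: for $B_{ik}$, $B_{kj}$ of a common sign the quadratic correction in $\mu_k(B)_{ij}$ equals $\pm B_{ik}B_{kj}$, and summing over the two sign patterns gives $\alpha^-\wedge\alpha^+ - \alpha^+\wedge\alpha^- = -2\,\alpha^+\wedge\alpha^-$, which cancels the $+2\,\alpha^+\wedge\alpha^-$ residue produced by the $-\eta_k$ piece of $\eta_k'$ together with the sign flip $\mu_k(B)_{kj} = -B_{kj}$. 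The one place your write-up is genuinely thinner than it should be is the skew-symmetrizable case: the expression $\sum_{i,j}B_{ij}\,\eta_i\wedge\eta_j$ only sees the antisymmetrization of $B$, and it is not automatic that this antisymmetrization mutates coherently when $B$ is merely skew-symmetrizable. The clean statement in \cite{GSV05} is phrased in terms of a coefficient matrix compatible with $B$ (built from the skew-symmetrizer $D$, which one must check is preserved under mutation), and your sentence about ``carrying the skew-symmetrizing coefficients through'' asserts this rather than proves it. That is a known, routine point, but it is the only step of your argument that is not actually carried out.
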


This form $\omega$ is called the \textbf{Weil-Petersson form} on $\X$.  The 2-form $\omega$ was originally defined as a differential 2-form on the smooth subvariety $\X'\subset \X$, but it can be regarded as a rational K\"ahler 2-form immediately from its construction.

\subsection{Regularity of $\omega$.}

This paper seeks to understand the extension of $\omega$ to the deep variety $\X_d=\X-\X'$.  Passing from differential 2-forms to K\"ahler 2-forms is necessary because $\X_d$ may contain singularities.  
We will show that $\omega$ is regular in the acyclic case; that is, that the equations defining it do not blow up anywhere on $\X_d$.  

First, a lemma which is more broadly applicable than just the acyclic case.

\begin{lemma}\label{lemma: Main}
Let $\A$ be a cluster algebra, and let $p\in \X$. If there exists a seed $(B,\mathbf{f})$ where $f_i(p)=f_j(p)=0$ implies that $B_{ij}=0$, then $\omega$ is locally regular at $p$.
\end{lemma}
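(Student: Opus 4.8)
The plan is to show that the Weil-Petersson form $\omega$, when written in the cluster $\mathbf{f}$ whose seed satisfies the hypothesis, has coefficients that are actually regular functions at $p$ — i.e. that the potentially-dangerous denominators $f_i f_j$ are cancelled or avoided at $p$. Concretely, I would start from the expression
\[ \omega = \omega_{\mathbf{f}} = \sum_{i,j}\frac{B_{ij}}{f_if_j}\,df_i\wedge df_j \]
and examine this term-by-term. A term indexed by $(i,j)$ has a denominator $f_if_j$, and so it can fail to be regular at $p$ only if $B_{ij}\neq 0$ and at least one of $f_i(p)$, $f_j(p)$ vanishes. The key observation is that the hypothesis controls exactly the worst case: if \emph{both} $f_i(p)=0$ and $f_j(p)=0$, then $B_{ij}=0$, so that term is identically zero and contributes nothing. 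Hence every surviving term has $B_{ij}\neq 0$, which forces at most one of $f_i(p),f_j(p)$ to be zero.

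**Next I would** reduce the claim to regularity of a single generic term. Since regularity is a local, $\O_{\X,p}$-module statement and the sum is finite, it suffices to show each individual summand $\frac{B_{ij}}{f_if_j}\,df_i\wedge df_j$ lies in the localization $\Omega^2(\A)_{\mathfrak{m}_p}$, where $\mathfrak{m}_p$ is the maximal ideal at $p$. By the previous paragraph I may assume $B_{ij}\neq 0$ and, without loss of generality, that $f_j(p)\neq 0$ while $f_i(p)$ may or may not vanish. The factor $f_j$ in the denominator is then a unit in $\O_{\X,p}$, so it causes no trouble; the real content is the factor $f_i$. To handle it I would use the exchange relation \eqref{eqn: Cluster}, $f_if_i' = P_i$ where $P_i=\prod_{B_{ik}>0}f_k^{B_{ik}}+\prod_{B_{ik}<0}f_k^{-B_{ik}}$, to rewrite $\frac{1}{f_i}df_i$ in a form where the offending $f_i$ in the denominator is traded for $f_i'$ and the numerator differential $df_i$ is re-expressed via $d(f_if_i')=dP_i$. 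The point of this maneuver is that $f_i'(p)$ need not vanish — indeed, by Corollary \ref{coro: NoAdj}-type reasoning the vanishing of $f_i$ and the structure of $P_i$ keep $f_i'$ away from zero — so that after the substitution every denominator is a unit at $p$ and the form is manifestly regular.

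**The hard part will be** executing this substitution cleanly so that the combination of differentials genuinely has no residual pole. The naive move $\frac{df_i}{f_i}=\frac{f_i'\,df_i}{f_if_i'}=\frac{f_i'\,df_i}{P_i}$ only helps if $P_i(p)\neq 0$; when $f_i(p)=0$, the relation $f_i(p)f_i'(p)=P_i(p)$ forces $P_i(p)=0$ as well, so $P_i$ alone is not a unit. The correct approach is therefore to differentiate the exchange relation, $df_i\cdot f_i' + f_i\cdot df_i' = dP_i$, and solve for $\frac{1}{f_i}\,df_i$ symbolically: one obtains
\[ \frac{1}{f_i}\,df_i = \frac{dP_i - f_i'\,df_i - f_i\,df_i'}{?}\]
which is circular, so instead I would work intrinsically with the generators. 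Using Lemma \ref{lemma: AcyclicStr}-style presentations, $\Omega(\A)$ is generated by the differentials of the generators subject to the differentiated exchange relations, and I would show directly that $\frac{B_{ij}}{f_if_j}df_i\wedge df_j$, paired against the relation $f_i'\,df_i \equiv dP_i - f_i\,df_i'$, produces an expression whose only denominators are $f_i'$ and $f_j$, both units at $p$. Verifying that the wedge structure causes the genuinely singular combination to drop out — essentially that the coefficient of any surviving pole is a multiple of the exchange relation and hence zero in $\Omega^2(\A)$ — is where the calculation must be done carefully; this is the crux and is presumably the heart of the author's argument.
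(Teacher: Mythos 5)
Your setup is right---discard the terms with $B_{ij}=0$, observe that in every surviving term at most one of $f_i,f_j$ vanishes at $p$, and attack the remaining pole with the exchange relation---but the proposal has two genuine gaps exactly where the cancellation has to happen. First, the reduction to a single summand $\frac{B_{ij}}{f_if_j}df_i\wedge df_j$ is too fine: such a term is in general \emph{not} locally regular at $p$ on its own, so "it suffices to show each individual summand lies in the localization" is a reduction to a false statement. (In the $SL_2$ example of the paper, $\frac{dx\wedge dc_1}{xc_1}$ has a genuine pole along $x=0$; only the combination $\frac{dx\wedge dc_1}{xc_1}+\frac{dx\wedge dc_2}{xc_2}=\frac{dx\wedge dx'}{c_1c_2}$ is regular.) The regularity is a collective phenomenon: for each $i$ with $f_i(p)=0$ one must treat the entire row sum $\sum_j B_{ij}\frac{df_i\wedge df_j}{f_if_j}$ at once, since it is only this combination that the differentiated exchange relation identifies with something pole-free.

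Second, your fallback---that after substituting $f_i'\,df_i\equiv dP_i-f_i\,df_i'$ the only denominators are $f_i'$ and $f_j$, "both units at $p$"---rests on a false premise: $f_i'(p)$ can perfectly well vanish. At the deep point of the open double Bruhat cell in $SL_2$ one has $x=x'=0$, and similarly in the $A_3$ example mutation pairs among the $x_{i,i+2}$ vanish simultaneously; so any expression carrying $f_i'$ in a denominator is useless. The identity that actually works is obtained by wedging $d$ of \eqref{eqn: Cluster} with $df_i$ (which kills the $f_i'\,df_i$ term), dividing by $f_i$, and then using $\prod_{B_{ij}>0}f_j^{B_{ij}}+\prod_{B_{ij}<0}f_j^{-B_{ij}}=f_if_i'$ a second time to convert the piece multiplied by $\prod_{B_{ij}<0}f_j^{-B_{ij}}$ into a multiple of $f_if_i'$, whose factor of $f_i$ cancels the division. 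The result is
\[ \sum_{j}B_{ij}\frac{df_i\wedge df_j}{f_if_j}=\Bigl[\prod_{j,\,B_{ij}>0}f_j^{-B_{ij}}\Bigr]\Bigl(df_i\wedge df_i'+f_i'\sum_{j,\,B_{ij}<0}B_{ij}\frac{df_i\wedge df_j}{f_j}\Bigr),\]
in which $f_i'$ appears only in the numerator and every denominator is an $f_j$ with $B_{ij}\neq 0$, hence nonvanishing at $p$ by hypothesis. That specific algebraic manipulation is the content of the lemma, and it is missing from your writeup.
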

\begin{proof}
Let $f_i\in \mathbf{f}$ be such that $f_i(p)=0$, and let $f_i'$ denote the mutation of $f_i$ in $\mathbf{f}$.  Then apply $d$ to Equation \eqref{eqn: Cluster} to get
\[ f_idf_i'+f_i'df_i =  \left(\sum_{j, B_{ij}>0}\frac{B_{ij}df_j}{f_j}\right)\prod_{j, B_{ij}>0}f_j^{B_{ij}}+\left(\sum_{j, B_{ij}<0}\frac{-B_{ij}df_j}{f_j}\right)\prod_{j,B_{ij}<0}f_j^{-B_{ij}}\]
Wedge both side with $df_i$ and divide by $f_i$ to get
\[ df_i\wedge df_i' = \left(\sum_{j, B_{ij}>0}B_{ij}\frac{df_i\wedge df_j}{f_if_j}\right)\prod_{j, B_{ij}>0}f_j^{B_{ij}}-\left(\sum_{j, B_{ij}<0}B_{ij}\frac{df_i\wedge df_j}{f_if_j}\right)\prod_{j,B_{ij}<0}f_j^{-B_{ij}}\]
\[= \left( \sum_{ j} B_{ij}\frac{df_i\wedge df_j}{f_if_j}\right)\left[\prod_{j, B_{ij}>0} f_j^{B_{ij}}\right]-\left( \sum_{j,B_{ij}<0} B_{ij}\frac{df_i\wedge df_j}{f_j}\right)f_i'\]
\[ \sum_{j}B_{ij} \frac{df_i\wedge df_j}{f_if_j} = \left[\prod_{j,B_{ij}>0}f_j^{-B_{ij}}\right]\left(df_i\wedge df_i'+
f_i'\left( \sum_{j,B_{ij}<0} B_{ij}\frac{df_i\wedge df_j}{f_j}\right)\right)\]
Notice that a cluster variable $f_j$ only appears in the denominator of the right-hand-side if $B_{ij}\neq0$, and so by assumption $f_j(p)\neq 0$.  Therefore, the expression on the right-hand-side is a locally regular K\"ahler 2-form at $p$.

The Weil-Petersson form $\omega_{\mathbf{f}}$ written in terms of the cluster $\mathbf{f}$ can have its terms grouped into sums of the above form, plus other terms whose denominator does not vanish at $p$.  Therefore, the Weil-Petersson form $\omega$ is locally regular at $p$. 
\end{proof}

\begin{thm}\label{thm: Main}
Let $\A$ be an acyclic cluster algebra.  Then $\omega$ is a regular K\"ahler 2-form.
\end{thm}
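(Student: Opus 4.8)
The plan is to combine the two immediately preceding results: Lemma~\ref{lemma: Main}, which reduces regularity of $\omega$ to a purely combinatorial condition on the simultaneous vanishing of cluster variables within a single seed, and Corollary~\ref{coro: NoAdj}, which guarantees that an acyclic seed satisfies exactly this condition at every point of $\X$. Since regularity of a K\"ahler form is a local property, it suffices to verify local regularity of $\omega$ at each $p\in\X$, and the theorem will then fall out almost immediately.

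First I would unwind the definition of acyclicity: because $\A$ is acyclic, there is an acyclic matrix $B$ with $(B,\mathbf{f})$ equivalent to $(A,\mathbf{x})$. I fix this single acyclic seed $(B,\mathbf{f})$ once and for all. The crucial feature is that Corollary~\ref{coro: NoAdj} applies to this \emph{same} seed uniformly across all of $\X$, so a single seed can be used to test the hypothesis of Lemma~\ref{lemma: Main} at every point, rather than having to produce a different seed point by point.

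Next, I fix an arbitrary $p\in\X$. For any two cluster variables $f_a,f_b\in\mathbf{f}$ with $f_a(p)=f_b(p)=0$, Corollary~\ref{coro: NoAdj}, applied to the acyclic seed $(B,\mathbf{f})$, forces $B_{ab}=0$. This is precisely the hypothesis of Lemma~\ref{lemma: Main} at $p$, witnessed by the seed $(B,\mathbf{f})$, so Lemma~\ref{lemma: Main} yields that $\omega$ is locally regular at $p$. As $p$ was arbitrary, $\omega$ is locally regular at every point of $\X$.

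Because the two lemmas carry all of the analytic and combinatorial content, I expect no genuine obstacle in the main line of the argument; the only point deserving care is the passage from pointwise local regularity to global regularity. Here one must recall that, for a finitely generated module $M$ over the Noetherian domain $\A$, an element of the rational module $\mathrm{Frac}(\A)\otimes_\A M$ that lies in the image of every localization $M_{\mathfrak{m}_p}\to \mathrm{Frac}(\A)\otimes_\A M$ already lies in the image of $M$ itself. Since $\A$ is a finitely generated integral domain by Lemma~\ref{lemma: AcyclicStr}, this standard local-to-global principle applies to $M=\Omega^2(\A)$ and upgrades the local regularity established above into the genuine regularity of $\omega$ as a K\"ahler 2-form on $\A$, completing the proof.
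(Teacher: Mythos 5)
Your proposal is correct and follows essentially the same route as the paper: fix a single acyclic seed, apply Corollary~\ref{coro: NoAdj} at each point to verify the hypothesis of Lemma~\ref{lemma: Main}, and conclude local regularity everywhere. Your final paragraph spelling out the local-to-global principle for the finitely generated module $\Omega^2(\A)$ is a welcome extra justification of a step the paper treats as immediate, but it does not change the substance of the argument.
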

\begin{proof}
Fix an acyclic seed $(B,\mathbf{f})$, and let $p\in \X$. By Corollary \ref{coro: NoAdj}, the hypothesis of Lemma \ref{lemma: Main} holds, and so $\omega$ is locally regular at $p$. Since this holds for all $p\in \X$, $\omega$ is regular on the entirety of $\X$.
\end{proof}
\begin{rem}
The proof of the lemma gives an explicit algorithm for expressing $\omega$ as a locally regular K\"ahler 2-form at a specific point $p$, but it does not give a globally regular expression (ie, denominator free) for $\omega$, though the theorem guarantees such an expression must exist.
\end{rem}

\begin{coro}\label{coro: Main}
If $\X$ is a smooth acyclic cluster variety, then $\omega$ extends to a regular 2-form on the entirety of $\X$.
\end{coro}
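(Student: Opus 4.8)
The plan is to combine Theorem \ref{thm: Main} with Lemma \ref{lemma: KtoD}, exploiting the fact that on a smooth variety the module of K\"ahler $i$-forms literally \emph{is} the module of regular differential $i$-forms. Since $\A$ is acyclic, the cluster variety $\X$ is an affine variety whose coordinate ring is $\O_\X = \A$ (this is precisely the finite-generation consequence of Lemma \ref{lemma: AcyclicStr}), and Theorem \ref{thm: Main} tells us that $\omega$ is a genuine element of the module of regular K\"ahler 2-forms $\Omega^2(\A)$, not merely a rational one.

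First I would invoke the smoothness hypothesis. Because $\X$ is a smooth affine variety, Lemma \ref{lemma: KtoD} (taken with $i=2$) supplies a canonical identification $\Omega^2(\O_\X) = \wedge^2_\X \mathcal{T}^*_\X$ between the regular K\"ahler 2-forms and the regular differential 2-forms on $\X$. Under this isomorphism, the regular K\"ahler 2-form $\omega$ produced by Theorem \ref{thm: Main} corresponds to a bona fide regular differential 2-form defined on the entirety of $\X$.

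It then remains only to verify that this differential 2-form is genuinely an \emph{extension} of the Weil-Petersson form, i.e.\ that its restriction to the cluster manifold $\X'$ recovers the 2-form originally defined in \cite{GSV05}. This follows from the compatibility of both constructions with localization: on each cluster torus $\mathbb{T}_{\mathbf f}$ the restriction of the K\"ahler form $\omega$ is $\omega_{\mathbf f}$, and Lemma \ref{lemma: KtoD} matches this K\"ahler form with the differential form $\omega_{\mathbf f}$ that was already known to agree with the original definition on $\mathbb{T}_{\mathbf f}$. Since the cluster tori cover $\X'$, the extended differential form and the original $\omega$ coincide on $\X'$.

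I do not anticipate any serious obstacle, as the analytic content was already dispatched in Theorem \ref{thm: Main}; this corollary is essentially a translation between the K\"ahler and differential languages. The one point meriting a word of care is that Lemma \ref{lemma: KtoD} is stated for \emph{affine} varieties, so I would first remark that acyclicity makes $\X$ affine (and smoothness is assumed outright), whence the lemma applies without modification.
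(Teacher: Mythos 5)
Your proposal is correct and is exactly the argument the paper intends: the corollary is an immediate consequence of Theorem \ref{thm: Main} together with the identification of regular K\"ahler 2-forms with regular differential 2-forms on a smooth affine variety from Lemma \ref{lemma: KtoD} (the paper in fact states the corollary without further proof). Your extra remarks on affineness via acyclicity and on compatibility with localization over the cluster tori are accurate and consistent with the paper's setup.
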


\section{Examples}  We close with several examples, to illustrate the structure of $\omega$ at the deep points in some small cluster varieties, and to show that $\omega$ may not be regular when the cluster algebra is not acyclic.

\subsection{The open double Bruhat cell in $SL_2(\mathbb{C})$.} Let 
\[ A = \gmat{ 0 & 1 & 1}\]
Denote the initial cluster by $\{x, c_1,c_2\}$, where the frozen variables are denoted by $c_i$.  Then the remaining cluster variable is $x'=\frac{c_1c_2+1}{x}$.  The cluster algebra $\A$ is given by
\[ \A(A) = \mathbb{C}[x,x',c_1^{\pm1},c_2^{\pm1}]/\langle xx'-(c_1c_2+1)\rangle\]
By the map
\[ (x,x',c_1,c_2)\rightarrow \gmat{ x & c_1 \\ c_2 & x' },\]
the cluster variety $\X(A)$ may be identified with the set of $2\times 2$ matrices of determinant $1$ such that $c_1\neq 0$ and $c_2\neq 0$.  This is the \emph{open double Bruhat cell} in $SL_2(\mathbb{C})$, and a motivating example of a cluster variety.

The union of the cluster tori $\X'(A)$ is the set of points where $x$ or $x'$ is non-zero, and so the deep variety $\X_d(A)$ is the set where $x=x'=0$; it consists of matrices of the form
\[ \gmat{0 & -c_1 \\ c_1^{-1} & 0} \]
As an open subvariety of the smooth variety $SL_2(\mathbb{C})$, the cluster variety $\X(A)$ is smooth even at $\X_d(A)$.

The Weil-Petersson form can be written
\[\omega = \frac{dx\wedge dc_1}{xc_1}+\frac{dx\wedge dc_2}{xc_2}=\frac{dx\wedge dx'}{c_1c_2}\]
In the second expression, it is clear it is globally regular on $\X(A)$, and so it defines a differential 2-form on the entire smooth variety $\X(A)$.

\subsection{Finite Type: $A_3$} Let 
\[ A = \gmat{
0 & 1 & 0 \\
-1 & 0 & 1 \\
0 & -1 & 0 }\]
The corresponding cluster algebra $\A(A)$ is the \emph{cluster algebra of type $A_3$}.  Its nine cluster variables correspond to internal diagonals of a hexagon, and are labelled $x_{i,j}$, with $i,j\in \mathbb{Z}/6$.  Clusters correspond to complete triangulations.

The cluster variety $\X(A)$ has a unique deep point $p$, whose coordinates are given by $x_{i,i+2}=0$ and $x_{i,i+3}=-1$, for all $i$.  This deep point has Zariski tangent dimension 4, and since $\X(A)$ is 3-dimensional, we see that $p$ is a singular point.

The Weil-Petersson form can be written
\[\omega = \frac{dx_{13}\wedge dx_{14}}{x_{13}x_{14}} + \frac{dx_{14}\wedge dx_{15}}{x_{14}x_{15}} = \frac{dx_{13}\wedge dx_{24}}{x_{14}}+\frac{dx_{46}\wedge dx_{15}}{x_{14}}\]
The first expression is $\omega_{\mathbf{f}}$ for the cluster $\mathbf{f}=\{x_{13},x_{14},x_{15}\}$, and the second expression is clearly regular at $p$, since $x_{14}(p)\neq0$.  Therefore, despite the non-smoothness of $p$, the Weil-Petersson form still extends as a K\"ahler 2-form on all of $\X(A)$.

\subsection{Affine Type: $\widetilde{A}(1,1)$.}

Let 
\[ A=\gmat{0 & 2 \\ -2 & 0 }\]
This cluster algebra has an infinite number of cluster variables, which are denoted by $x_i$, $i\in \mathbb{Z}$, with clusters $\{x_i,x_{i+1}\}$, $i\in \mathbb{Z}$.  The cluster algebra may be finitely-presented, as
\[ \mathbb{C}[x_0,x_1,x_2,x_3]/\langle x_0x_2-(x^2_1+1),x_1x_3-(x_2^2+1)\rangle\]

The cluster variety $\X(A)$ has four deep points, denoted $p_j$, $j\in \mathbb{Z}/4$, given by
\[ x_k(p_j)=\left\{\begin{array}{ll}
\sqrt{-1} & \text{if } k\equiv j\,\mod 4 \\
-\sqrt{-1} & \text{if } k\equiv j+2 \,\mod 4 \\
0 & \text{if } k\equiv j+1 \,\mod 2 \end{array}\right\}\]

The Weil-Petersson form is then
\[ \omega = 2\frac{dx_0\wedge dx_1}{x_0x_1} = \frac{dx_0\wedge dx_2}{x_1^2} = \frac{dx_{-1}\wedge dx_1}{x_0^2}\]
The second expression is clearly regular at $p_i$ when $i$ is odd, and the third expression is clearly regular at $p_i$ when $i$ is even.  A globally-regular expression for $\omega$ is
\[ x_0x_3dx_1\wedge dx_2 - \frac{x_1x_3}{2}dx_0\wedge dx_2 - \frac{x_0x_2}{2}dx_1\wedge dx_3+x_1x_2 dx_1\wedge dx_2\]
We do not know of a general method for producing these expressions, though they exist for acyclic cluster algebras by Theorem \ref{thm: Main}.

\subsection{The Markov cluster algebra.}

Let
\[A = \gmat{
0 & 2 & -2\\
-2 & 0 & 2 \\
2 & -2 & 0} \]
This defines the \textbf{Markov cluster algebra}, which is (in)famous both for its number theoretic applications and its pathological behavior (ie, it is infinitely generated \cite[Theorem 1.24]{BFZ05}).  It is the simplest example of a non-acyclic cluster algebra, and so the results of this paper do not apply.

The cluster algebra $\A(A)$ is $\mathbb{N}$-graded, by letting every cluster variable have degree 1.  The K\"ahler 2-forms are an $\mathbb{N}$-graded $\A(A)$-module, with $\deg(da) = \deg(a)-1$.  This extends to a $\mathbb{Z}$-grading on the rational K\"ahler 2-forms.  Then it is clear that the Weil-Petersson form has degree -2, and thus it is \underline{not} a regular K\"ahler 2-form.

What does this mean geometrically?  The cluster variety $\X(A)$ has a deep point $p_0$, where every cluster variable is zero.\footnote{This is not the only deep point; there are many others.}  The above observation shows that the equations defining $\omega$ blow up at $p_0$.  However, Lemma \ref{lemma: Main} can be used to show $\omega$ is regular everywhere else on $\X(A)$.

\bibliography{MyNewBib}{}
\bibliographystyle{amsalpha}

\end{document}